\documentclass[a4paper,11pt]{article}
\usepackage{amssymb,latexsym}
\usepackage[english]{babel}
\usepackage{graphicx}
\usepackage{amsthm}
\usepackage{amsmath}
\usepackage{slashbox}
\usepackage{scrextend}
\usepackage{epstopdf}
\usepackage{epsfig}

\newtheorem{theorem}{Theorem}[section]

\newtheorem{corollary}[theorem]{Corollary}
\newtheorem{proposition}[theorem]{Proposition}
\newtheorem{definition}{Definition}[section]
\newtheorem{observation}[theorem]{Observation}

\theoremstyle{definition}

\begin{document}

\title{{\bf Planar 3-dimensional assignment problems with Monge-like cost 
arrays}}
\author{
\sc Ante \'{C}usti\'{c}
\thanks{{\tt custic@opt.math.tugraz.at}.
Institut f\"ur Optimierung und Diskrete Mathematik, TU Graz, 
Steyrergasse 30, A-8010 Graz, Austria}
\and
\sc Bettina Klinz\thanks{{\tt klinz@opt.math.tugraz.at}.
Institut f\"ur Optimierung und Diskrete Mathematik, TU Graz, 
Steyrergasse 30, A-8010 Graz, Austria}
\and
\sc Gerhard J. Woeginger\thanks{{\tt gwoegi@win.tue.nl}.
Department of Mathematics and Computer Science, 
TU Eindhoven, P.O. Box 513, 5600 MB Eindhoven, Netherlands}
}
\date{Preliminary version, May 2014}

\maketitle


\begin{abstract}
Given an $n\times n\times p$ cost array $C$ 
we consider the problem $p$-P3AP which consists
in finding $p$ pairwise disjoint permutations 
$\varphi_1,\varphi_2,\ldots,\varphi_p$  of $\{1,\ldots,n\}$
such that $\sum_{k=1}^{p}\sum_{i=1}^nc_{i\varphi_k(i)k}$
is minimized. For the case $p=n$ the 
planar 3-dimensional assignment problem P3AP results.

Our main result concerns the $p$-P3AP on cost arrays
$C$ that are layered Monge arrays. In a layered Monge array all
$n\times n$ matrices that result from fixing 
the third index $k$ are Monge 
matrices. We prove that the $p$-P3AP and the P3AP remain NP-hard 
for layered Monge arrays. Furthermore, we show that in the
layered Monge case there always exists an optimal solution of the
$p$-3PAP which can be represented as matrix with bandwidth
$\le 4p-3$. This structural result allows us to 
provide a dynamic programming algorithm that solves 
the $p$-P3AP in polynomial time on layered Monge arrays 
when $p$ is fixed.

\medskip\noindent\emph{Keywords.} layered 
Monge arrays, planar 3-dimensional assignment problem,
  repeated assignment problem, block structure.

\end{abstract}

\section{Introduction}\label{sec:intro}

Let $S_n$ denote the set of permutations on $\{1,\ldots,n\}$. 
Given an $n\times n$ cost matrix $M$, the classical linear assignment problem
LAP can be stated as follows
$$
\min_{\varphi\in S_n} \sum_{i=1}^n m_{i\varphi(i)}.
$$
The LAP can be generalized to three dimensions in two ways. 
The resulting two problems are the axial 3-dimensional
assignment problem and the planar 3-dimensional assignment problem~\cite{BDM09}.

Let an $n\times n\times n$ array $C=(c_{ijk})$ be given. For notational
convenience let us introduce {\em planes\/} and {\em lines\/} of 
$C$. We refer to the subarray (matrix) that results from $C$
by fixing the third index $k$ as $k$-planes. 
Lines arise if a subset of two indices is fixed.

The {\em axial 3-dimensional   assignment problem\/} A3AP can be stated 
as follows: 
$$
\min_{\varphi,\psi\in S_n} \sum_{i=1}c_{i\varphi(i)\psi(i)}.
$$

In the {\em planar 3-dimensional assignment problem\/} P3AP the goal is to
find $n$ pairwise disjoint permutations 
$\varphi_1,\varphi_2,\ldots,$ $\varphi_n\in S_n$  
so as to minimize the cost function
\[
\sum_{k=1}^{n}\sum_{i=1}^nc_{i\varphi_k(i)k}
\]
where two permutations $\varphi,\psi\in S_n$ are said to be disjoint if
$\varphi(i)\ne \psi(i)$ for all $i\in \{1,\ldots,n\}$.

Both problems can be formulated as integer linear programs. Below the 
ILP formulation of the P3AP is given. 
\begin{align}\label{p3ap:c}
	\min\ &\sum_{i=1}^n \sum_{j=1}^n \sum_{k=1}^n c_{ijk}x_{ijk}\\ 
	\textrm{s.t.}\ \ &\sum_{k=1}^n x_{ijk}=1 \qquad \qquad i,j=1,2,\ldots,
        n 
\label{p3ap:k}\\
	&\sum_{i=1}^n x_{ijk}=1 \qquad  \qquad j,k=1,2,\ldots, n \label{p3ap:i} \\
	&\sum_{j=1}^n x_{ijk}=1 \qquad  \qquad i,k=1,2,\ldots, n \label{p3ap:j} \\
	&x_{ijk}\in \{0,1\}  \qquad \qquad i,j,k=1,2,\ldots,n.\label{p3ap:0}
\end{align}
Let $c(x)=\sum_{i=1}^n \sum_{j=1}^n \sum_{k=1}^n c_{ijk}x_{ijk}$ denote the
cost of the assignment $x$.
\medskip

Both the A3AP and the P3AP are known to be NP-hard already for 0-1 cost
arrays. For the axial case this has been proved by Karp~\cite{K72} and for 
the planar case by Frieze~\cite{F83}.

The {\em $p$-layer planar 3-dimensional assignment problem\/}, $p$-P3AP for
short, is the following variant/generalization of the P3AP. Given an 
$n\times n\times p$ cost array $C$ where $p\in\{2,\ldots,n\}$, the goal is
to find $p$ pairwise disjoint permutations $\varphi_1,\ldots,\varphi_p$
so as to minimize the cost function
\[
\sum_{k=1}^{p}\sum_{i=1}^nc_{i\varphi_k(i)k}.
\]
Note that for $p=n$ the P3AP results.

The associated integer linear program arises from the one for the 3PAP
by replacing the equality constraints in \eqref{p3ap:k} by
$\sum_{k=1}^p x_{ijk}\leq 1$ and setting the upper limit for the index 
$k$ in the objective function \eqref{p3ap:c} and in the constraints
\eqref{p3ap:i}, \eqref{p3ap:j} and \eqref{p3ap:0} to $p$.

Frieze~\cite{F83} proved that the {\sc DISJOINT MATCHING} 
problem described below 
is NP-complete.
\begin{addmargin}[1em]{1em}
\vspace{5pt}DISJOINT MATCHINGS (DM)\vspace{1pt}\\
\textbf{Input:} Disjoint finite sets $Q,S$ with $|Q|=|S|$ and sets $A_1, A_2 \subseteq P\times Q$.\vspace{1pt} \\
\textbf{Question:} Do there exist matchings $M_i\subseteq A_i$, for $i=1,2$ such that $M_1\cap M_2=\emptyset$?\vspace{4pt}
\end{addmargin}

It immediately follows that the $p$-layer planar 3-dimensional assignment 
problem is NP-hard for every fixed $p\ge 2$.
\smallskip

For the P3AP both exact and heuristic approaches have been devised,
see e.g.\ ~\cite{M96,MM94,V67}.  The polyhedral structure of the P3AP 
has been studied in~\cite{AMM06,EBG86,EV96}. Frieze and Sorkin~\cite{FS13} 
establish high probability lower and upper bounds for the 
optimal objective function value under the assumption that the costs 
$c_{ijk}$ are independent, identically distributed exponential 
random variables with parameter 1.
The P3AP has applications in many areas such as timetabling, rostering and
satellite launching. For references and more details 
see~the survey by Spieksma~\cite{Sp00} and the monograph by Burkard et al.~\cite{BDM09}. 

The $p$-P3AP has received less attention in the literature than 
the P3AP. Gimadi and Glazkov~\cite{GG06} suggested an approximation algorithm
for the $p$-P3AP and analysed its performance for the case where the costs 
are independent, identically distributed uniform random variables.
Yokoya et al.~\cite{YDY11} present an algorithm for the $p$-P3AP 
(called repeated assignment problem in their paper) that outperforms
general ILP-solvers for instances with small $p$.
\smallskip

{\bf Polynomially solvable special cases.} 
For the axial 3-dimensional  several polynomially solvable 
special cases are known, see~\cite{BDM09}. We are not 
aware, however, of any result in the literature on non-trivial polynomially 
solvable cases of the planar 3-dimensional assignment problem.
For the $p$-P3AP the situation is hardly better. The only known special case
concerns the case when all $k$-planes of $C$ are the same, i.e.\,
$c_{ij1}=c_{ij2}=\ldots=c_{ijp}$. In this case the $p$-P3AP reduces to finding
a $p$-factor with minimum cost on a bipartite graph (the $p$-factor can then be
decomposed into $p$ pairwise disjoint perfect matchings/assignments) and is
solvable by network flow methods.
\smallskip

Many hard combinatorial optimization problems become efficiently solvable 
for Monge-like cost structures. Examples include the  
traveling salesman problem~\cite{GLS85}, economic lot-sizing 
problems~\cite{AP93} and the axial d-dimensional assignment and transportation 
problems~\cite{BKR96}. 

The most famous Monge structures are {\em Monge matrices\/} 
which arise in many areas
of mathematics~\cite{BKR96}.
\begin{definition}
An $n\times n$ matrix $M=(m_{ij})$ is 
called Monge matrix if the following property holds
\begin{equation}\label{eq:monge}
m_{ij}+m_{kl}\leq m_{il}+m_{kj} \qquad\text{ for all } 1\le i<k \le n, 1\le j<l
\le n.
\end{equation}
\end{definition}

{\em Monge arrays\/} are a generalization of the concept of Monge matrices to 
$d\ge 3$ dimensions. A $d$-dimensional array is a Monge array if and only if
all its $2\times 2$ submatrices are Monge matrices. More details are provided
in Section~\ref{sec:prel}.

The following folklore result for the linear assignment problem LAP and the 
axial 3-dimensional assignment problem A3AP is well known.

\begin{observation}\label{obs:mongeass} 
Let $\varepsilon_n$ denote the identity permutation 
with $\varepsilon_n(i):=i$.
\begin{itemize}
\item[(i)] $\varepsilon_n$ is an optimal solution for the LAP on $n\times n$
  Monge matrices.
\item[(ii)] $\varphi=\varepsilon_n$ and $\psi=\varepsilon_n$ provides 
an optimal solution for the A3AP on $n\times n\times n$ Monge arrays.
\end{itemize}
\end{observation}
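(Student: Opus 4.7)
\medskip
\noindent\textbf{Proof plan.}

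Both parts are proved by iteratively resolving adjacent inversions via bubble-sort-style swaps, each step being shown cost non-increasing by one or two Monge inequalities.

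For part~(i), this is the classical argument: if $\varphi\ne\varepsilon_n$, then $\varphi$ admits an adjacent inversion, i.e., an $i$ with $\varphi(i)>\varphi(i+1)$; setting $j:=\varphi(i+1)$ and $l:=\varphi(i)$, the Monge inequality~\eqref{eq:monge} applied at $\{i,i+1\}\times\{j,l\}$ is precisely the statement that swapping the images of $i$ and $i+1$ does not increase $\sum_i m_{i\varphi(i)}$. Iterating drives $\varphi$ to $\varepsilon_n$ without increasing the cost.

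For part~(ii), the plan is to bubble-sort $\varphi$ first and then, once $\varphi=\varepsilon_n$, to bubble-sort $\psi$. At an adjacent $\varphi$-inversion with $k:=i+1$ I set $a:=\varphi(k)<\varphi(i)=:b$, $c:=\psi(i)$ and $d:=\psi(k)$ (note $c\ne d$, since $\psi$ is a permutation), and the swap depends on~$\psi$. If $c<d$, I swap only $\varphi$; the cost change $c_{iac}+c_{kbd}-c_{ibc}-c_{kad}$ is shown to be $\le 0$ by adding the Monge inequality of~$C$ obtained by fixing the third index at~$c$ to the one obtained by fixing the first index at~$k$, so that the four extraneous entries $c_{kac}$ and $c_{kbc}$ cancel. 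If instead $c>d$, I swap $\varphi$ and $\psi$ simultaneously; the resulting cost change $c_{iad}+c_{kbc}-c_{ibc}-c_{kad}$ is shown to be $\le 0$ by adding the Monge inequality from fixing the third index at~$d$ to the one from fixing the second index at~$b$, with $c_{kbd}$ and $c_{ibd}$ cancelling. Each step removes one $\varphi$-inversion, so after finitely many steps $\varphi=\varepsilon_n$. Any remaining adjacent $\psi$-inversion $(i,k)$ with $a:=\psi(k)<\psi(i)=:b$ is then resolved by a single $\psi$-swap of cost change $c_{iia}+c_{kkb}-c_{iib}-c_{kka}$, which is $\le 0$ by combining the Monge inequalities obtained by fixing the second index at~$i$ and the first index at~$k$. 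This brings $(\varphi,\psi)$ to $(\varepsilon_n,\varepsilon_n)$ without ever increasing the cost.

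The main subtlety — and the reason (ii) is trickier than (i) — is that the naive single $\varphi$-swap can in fact \emph{increase} the cost when $\psi$ is oriented in the opposite sense at $(i,k)$; the cure is the simultaneous $(\varphi,\psi)$-swap of the $c>d$ case. The common technical ingredient in all four sub-arguments is the cancellation of four extraneous entries after summing two well-chosen Monge inequalities drawn from different slices of the array~$C$.
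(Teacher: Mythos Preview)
Your argument is correct. Part~(i) is the classical adjacent-transposition proof, and in part~(ii) your two-case analysis (swap $\varphi$ alone when $c<d$, swap $\varphi$ and $\psi$ together when $c>d$) together with the pairwise combination of Monge inequalities from two different slices of $C$ is checked line by line without error; the termination via the inversion count of $\varphi$ (and then of $\psi$) is sound.

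As for the comparison: the paper does not actually prove Observation~\ref{obs:mongeass}. It is stated there as a well-known folklore result and left without argument. Your write-up therefore supplies a self-contained proof where the paper offers none; the exchange-argument route you take is the standard one for such statements (and indeed is the natural proof one would expect behind the folklore claim).
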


This result motivates to ask whether Monge properties 
are helpful also for the planar 3-dimensional assignment problem 3PAP and its
$p$-layered variant $p$-3PAP.

The main focus in this paper will be on the class of layered Monge arrays
which form a superclass of Monge arrays. The  array $C=(c_{ijk})$
is called {\em layered Monge array} if all its $k$-planes  are Monge matrices.
\medskip

{\bf Organization of the paper.} 
The paper is organized as follows. In Section~\ref{sec:hardness}, we provide
intractability results for the P3AP and the $p$-P3AP. First, it is shown that 
the P3AP remains hard if the cost array $C$ is monotone in all three coordinate
directions (along all lines). 
<As main result of Section~\ref{sec:hardness} it is  shown 
that the P3AP and the $p$-P3AP stay NP-hard on the class of 
layered Monge arrays.

In Section~\ref{sec:struct} the structure of an optimal solution of the 
$p$-P3AP on Monge arrays and on layered Monge arrays is investigated.
For the case of Monge arrays it is shown that there always exists
an optimal  solution of the 2-P3AP which has a  nice block structure. 
The result breaks down already for $p=3$. For the case of layered Monge arrays
it is shown that there always exists an optimal solution of the P3AP 
which can be represented as matrix 
with a bandwidth $\le 4p-3$. This result improves and generalizes
the result on $k$-matchings in a Monge matrix from~\cite{PRW94_2}.
In Section~\ref{sec:algorithm} the bandwidth bound  is 
exploited to provide a dynamic programming algorithm that obtains an optimal
solution of the $p$-P3AP on layered Monge arrays and runs in polynomial time
if $p$ is fixed.


\section{Preliminaries}\label{sec:prel}

\subsection{Basic facts about planar 3-dimensional assignment 
problems}\label{subsec:basics}

A {\em Latin rectangle\/} is an $m\times n$ table $L$ filled with 
integers from the set $\{1,\ldots,n\}$  such that every row and 
every column contains every integer at most once. An $n\times n$ 
Latin rectangle is called {\em Latin square} (of order $n$).
A {\em partial Latin square\/} results if entries of the table can stay 
unfilled.

The following observation is straightforward and well-known.

\begin{observation}\label{obs:latin}
The set of feasible solutions of the $p$-3PAP is in one-to-one 
correspondence with
\begin{itemize}
\item[(i)] the set of $p\times n$ Latin rectangles
\item[(ii)]
the set of partial $n\times n$ Latin squares 
in which each of the integers $1,2,\ldots,p$ appears exactly $n$ times 
and the integers from $\{p+1,\ldots,n\}$ do not appear at all.
\end{itemize}
\end{observation}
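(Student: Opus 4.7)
The plan is to construct the two bijections explicitly; the verifications amount to unwinding definitions. For part (i), I would associate with a feasible tuple $(\varphi_1,\ldots,\varphi_p)$ the $p\times n$ table $L$ whose $(k,i)$-entry equals $\varphi_k(i)$. The row condition of a Latin rectangle is then exactly the statement that each $\varphi_k$ is a permutation, while the column condition is exactly the pairwise disjointness requirement $\varphi_k(i)\neq\varphi_{k'}(i)$ for $k\neq k'$. Conversely, reading off the $k$-th row of a $p\times n$ Latin rectangle as $\varphi_k$ reverses this construction, and the two maps are mutually inverse by inspection.

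For part (ii), I would define a partial $n\times n$ table $T$ by setting $T(i,j)=k$ whenever $\varphi_k(i)=j$, and leaving $T(i,j)$ blank otherwise. Well-definedness of $T$ follows from disjointness: if two indices $k\neq k'$ both claimed the cell $(i,j)$, then $\varphi_k(i)=\varphi_{k'}(i)=j$, a contradiction. For a fixed row $i$, each $k\in\{1,\ldots,p\}$ occupies the single cell $(i,\varphi_k(i))$, so every label from $\{1,\ldots,p\}$ appears exactly once in row $i$; a symmetric argument using the inverse permutations $\varphi_k^{-1}$ handles columns. Summing over the $n$ rows, each label $k\le p$ appears exactly $n$ times, and by construction no label from $\{p+1,\ldots,n\}$ is used. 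Conversely, given such a partial Latin square, the cells carrying a fixed label $k$ form a permutation matrix (since $k$ occurs once per row and once per column), which defines a permutation $\varphi_k$; two distinct labels never share a cell, so the resulting $\varphi_1,\ldots,\varphi_p$ are pairwise disjoint.

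There is no genuine obstacle; the only thing to keep straight is the asymmetric role of the three coordinates, namely that in the partial-Latin-square picture the first two indices $(i,j)$ index the cells of $T$ while the third index $k$ becomes the label written into the cell. Since the statement is marked as folklore and will be used purely as a combinatorial reformulation later, a short paragraph per part that spells out the forward and backward maps and checks they respect the relevant incidence conditions should suffice.
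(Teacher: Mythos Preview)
Your proposal is correct and follows essentially the same approach as the paper: explicit forward and backward maps via the rule $x_{ijk}=1 \Leftrightarrow \varphi_k(i)=j$, with part (ii) identical to the paper's construction. One small discrepancy worth noting: for part (i) the paper places integer $i$ in row $k$ and column $j$ when $x_{ijk}=1$ (so the $(k,j)$-entry is $\varphi_k^{-1}(j)$), whereas you place $\varphi_k(i)$ in position $(k,i)$; both are valid bijections to the set of $p\times n$ Latin rectangles, but your convention will not match the paper's Figure~\ref{fig:lsq4}, so align with the paper's choice if you intend to reference that example.
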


To see this, let $x$ be a feasible solution of the $p$-P3AP. 
For (i) place integer $i$ in column $j$ and row $k$  
if and only if
$x_{ijk}=1$. One ends up with a $p\times n$ Latin rectangle.
 
For (ii) place integer $k$ in row $i$ and column $j$ if and only 
if $x_{ijk}=1$. One obtains a partial Latin square of the claimed type. 
For the special case of the P3AP one ends up with a Latin square.
See Figure~\ref{fig:lsq4} for an illustrating example with $n=4$ and $p=3$.
\begin{figure}[h]
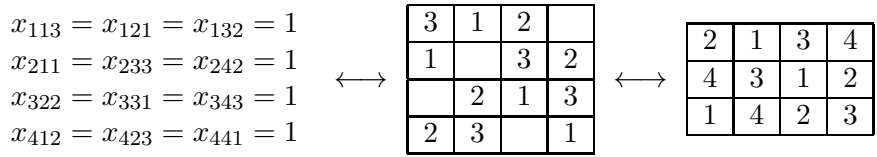

	\centering
	\begin{tabular}{c}
		$x_{113}=x_{121}=x_{132}=1$ \\
		$x_{211}=x_{233}=x_{242}=1$ \\
		$x_{322}=x_{331}=x_{343}=1$ \\
		$x_{412}=x_{423}=x_{441}=1$ \\
	\end{tabular} \ $\longleftrightarrow$ \ 
	\begin{tabular}{|c|c|c|c|}
		\hline 3&1 &2& \\
		\hline 1&&3&2 \\
		\hline &2&1&3 \\
		\hline 2&3&&1 \\ \hline
	\end{tabular}  \ $\longleftrightarrow$ \ 
	\begin{tabular}{|c|c|c|c|}
		\hline 2&1&3&4 \\
		\hline 4&3&1&2 \\
		\hline 1&4&2&3\\\hline
	\end{tabular}
	\caption{Example for partial Latin square/Latin 
rectangle representation}
	\label{fig:lsq4}
\end{figure}

The partial Latin square representation uses an $n\times n$ table regardless
of the value of $p$. For small values of $p$ the Latin rectangle
representation is more compact and will turn out to be useful in 
Section~\ref{sec:struct}.
\medskip

The following observation is straightforward, but turns out to be useful, for
example for showing that assuming monotonicity of $C$ does not make the P3AP
and the $p$-P3AP any easier (see Section~\ref{sec:hardness}).

\begin{observation}\label{obs:shift}
Let $C$ be an $n\times n\times p$ array and let 
$A=(a_{ij})$ be an $n\times n$ matrix and $B=(b_{ij})$ and $D=(d_{ij})$ be 
$n\times p$ matrices.
\begin{itemize}
\item Let $p=n$. The P3AP instances with cost array $C$ and with cost array 
$C+C'$ where 
\begin{equation}\label{eq:sumdecomp1}
c'_{ijk}=a_{ij}+b_{ik}+d_{jk}  \qquad \text{ for all } \quad
i,j,k\in\{1,\ldots,n\}
\end{equation}
are equivalent.   The objective function value is shifted by
the constant $\alpha=\sum_{i=1}^n\sum_{j=1}^n(a_{ij}+b_{ij}+d_{ij})$.
\item The $p$-P3AP instances with cost array $C$ and with cost array $C+C''$
  where 
\begin{equation}\label{eq:sumdecomp2}
c''_{ijk}=b_{ik}+d_{jk}  \qquad \text{ for all } \quad
i,j\in\{1,\ldots,n\}, k\in\{1,\ldots,p\}
\end{equation}
are equivalent. The objective function value is shifted by the constant
$\beta=\sum_{i=1}^n \sum_{k=1}^p(b_{ik}+d_{ik})$.
\end{itemize}
\end{observation}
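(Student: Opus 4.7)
The plan is to verify by direct computation that, for every feasible solution $x$, the extra cost contributed by $C'$ (respectively $C''$) equals the claimed constant $\alpha$ (respectively $\beta$). Since the shift is the same for every feasible $x$, the set of optimal solutions of the two instances coincides.

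First, I would expand $\sum_{i,j,k} c'_{ijk}\, x_{ijk}$ using linearity as three separate sums coming from the three summands $a_{ij}$, $b_{ik}$, $d_{jk}$. Swapping the order of summation in each piece produces an inner sum of $x$-variables over a single index, for example $\sum_{i,j} a_{ij} \sum_k x_{ijk}$. In the setting of the P3AP ($p=n$) these three inner sums are precisely the left-hand sides of the equality constraints \eqref{p3ap:k}, \eqref{p3ap:j} and \eqref{p3ap:i}, so each inner sum equals $1$. What remains is
\[
\sum_{i,j} a_{ij} + \sum_{i,k} b_{ik} + \sum_{j,k} d_{jk},
\]
which, because $p=n$, is exactly $\alpha$ as defined in the statement.

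For the second part, the same manipulation applied to the two summands $b_{ik}$ and $d_{jk}$ yields inner sums $\sum_j x_{ijk}$ and $\sum_i x_{ijk}$; these equal $1$ by the equality constraints \eqref{p3ap:j} and \eqref{p3ap:i}, which remain valid in the $p$-P3AP. Summing what is left gives $\beta$. The only genuinely conceptual point—and the one I expect to be the main (mild) obstacle—is explaining why an $a_{ij}$ term is \emph{not} admissible here: in the $p$-P3AP the constraint \eqref{p3ap:k} is relaxed to the inequality $\sum_k x_{ijk}\le 1$, so the corresponding sum $\sum_{i,j} a_{ij}\sum_k x_{ijk}$ depends on $x$ and would not produce a uniform shift. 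This is precisely what forces the weaker decomposition \eqref{eq:sumdecomp2} in the second bullet.

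Finally, I would conclude by the standard observation that if two objective functions differ by a global additive constant on the common feasible region, then any minimizer of one is a minimizer of the other, which gives the claimed equivalence and identifies the shift.
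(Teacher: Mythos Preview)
Your argument is correct and is exactly the natural verification: expand the additional cost contribution and use the equality constraints \eqref{p3ap:k}--\eqref{p3ap:j} (respectively only \eqref{p3ap:i} and \eqref{p3ap:j} for the $p$-P3AP) to collapse each inner sum to $1$. The paper itself does not spell out a proof---it simply labels the statement an observation and calls it ``straightforward''---so your write-up is precisely the standard justification one would supply, including the remark on why the $a_{ij}$ term must be dropped once \eqref{p3ap:k} becomes an inequality.
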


We mention that arrays that fulfill the property \eqref{eq:sumdecomp1} 
have been named sum-decomposable with parameters $d=3$ and $s=2$ in~\cite{CK14}.
There it was proved that all feasible solutions of the P3AP
have the same cost if and only if the cost array $C$ is sum-decomposable
with parameters $d=3$ and $s=2$. The cost arrays that fulfill
\eqref{eq:sumdecomp2} play an analogous role for the $p$-P3AP for the 
$p<n$ case.

\subsection{Cost arrays with Monge properties}\label{subsec:monge}

The notion of Monge matrices can be generalized to $d>2$ dimensions. 
\begin{definition}\label{MonDef}
An $n_1\times n_2 \times \cdots \times n_d$ $d$-dimensional array 
$C=(c_{i_1i_2\cdots i_d})$ is called \textbf{Monge array} if for 
all $i_k=1,\ldots,n$ and $j_k=1,\ldots,n$, $k=1\ldots,d,$ we have
\begin{equation}c_{s_1s_2\cdots s_d}+c_{t_1t_2\cdots t_d}
\le c_{i_1i_2\cdots i_d}+c_{j_1j_2\cdots j_d},
\end{equation}
where $s_k=\min\{i_k,j_k\}$ and $t_k=\max\{i_k,j_k\}$, $k=1\ldots,n$. 
\end{definition}

Note that a two dimensional Monge array is a Monge matrix. It is well known
that a $d$-dimensional array $C$ is a Monge array if and only if every
two-dimensional subarray (matrix) of $C$ corresponding to fixed values for
$d-2$ of the $d$ indices in $C$ is a Monge matrix, see~\cite{AP89}.

Next we define a special class of Monge arrays.
\begin{definition}\label{defDis}
An $n\times n \times \cdots \times n$ $d$-dimensional array 
$C=(c_{i_1i_2\cdots i_d})$ is called {\em distribution array\/} if
\begin{equation}
c_{i_1i_2\cdots i_d}=-\sum_{j_1=1}^{i_1}\sum_{j_2=1}^{i_2}\ldots\sum_{j_d=1}^{i_d}
p_{j_1j_2\cdots j_d}
\end{equation}
where $p_{j_1j_2\cdots j_d}\geq 0$ for all 
$(j_1,j_2,\ldots, j_d)\in\{1,\ldots,n\}^d$.
\end{definition}

It is easy to show that every distribution array is a Monge array.
Next we consider a superclass of the class of Monge arrays.
\begin{definition}
Let $C=(c_{ijk})$ be a three-dimensional $n\times n\times p$ array such that 
for every $k$, $1\leq k\leq p$, the matrix $M^k=(m^k_{ij})$ where 
$m^k_{ij}=c_{ijk}$ is a Monge matrix. Then $C$ is called an {\em array with
  Monge layers\/}, or {\em layered Monge array\/} for short.
\end{definition}

\section{Intractability results}\label{sec:hardness}

Some hard combinatorial optimization problems become easy if the cost structure
fulfills a monotonicity property. Unfortunately, monotonicity does not help for
the P3AP.

\begin{definition}
An $n_1\times n_2\times n_3$ cost array $C$ is called triply graded if
$C$ is monotone increasing along all lines of $C$.
\end{definition}

\begin{theorem}\label{thm:mono}
The P3AP and the $p$-P3AP stay NP-hard on the class of triply graded cost 
arrays.
\end{theorem}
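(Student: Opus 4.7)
The plan is to reduce the generic P3AP and $p$-P3AP, both of which are known to be NP-hard already on $0/1$ cost arrays (Frieze~\cite{F83} for P3AP, and the reduction from DM mentioned just before Theorem~\ref{thm:mono} for $p$-P3AP with any fixed $p\ge 2$), to their triply graded restrictions by adding a harmless shift to the cost array. Given a $0/1$ instance $\widetilde{C}=(\widetilde{c}_{ijk})$ of size $n\times n\times p$, I would define a new instance $C:=\widetilde{C}+C'$ with $c'_{ijk}:=M(i+j+k)$, where $M$ is a constant $\ge 2$; the idea is that $C'$ is both cheap enough to keep the reduction polynomial and structured enough to leave the set of optimal solutions unchanged.

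First I would invoke Observation~\ref{obs:shift} to argue that $\widetilde C$ and $C$ give equivalent instances. The array $C'$ is sum-decomposable in the form required by the second bullet of that observation: setting $b_{ik}:=M(i+k)$ and $d_{jk}:=Mj$, we have $c'_{ijk}=b_{ik}+d_{jk}$. Hence the $p$-P3AP on $C$ differs from the $p$-P3AP on $\widetilde C$ only by the additive constant $\beta=\sum_{i,k}(b_{ik}+d_{ik})$; the P3AP case is covered by the same argument by taking $p=n$. In particular, the two instances have the same set of optimal solutions.

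Next I would verify that $C$ is triply graded. Along any line of $C$ — obtained by increasing exactly one of the indices $i,j,k$ by one while holding the other two fixed — the contribution of $C'$ increases by exactly $M\ge 2$, while the corresponding change of $\widetilde c_{ijk}\in\{0,1\}$ is at most $1$ in absolute value. Consequently $c_{i+1,j,k}-c_{i,j,k}\ge M-1\ge 1$, and analogously in the $j$- and $k$-directions, so $C$ is strictly monotone increasing along every line. Since $\widetilde C\mapsto C$ is obviously polynomial-time computable, NP-hardness transfers to the triply graded class.

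I do not expect a real obstacle; the only point that needs care is that a single shift must work for both problems. The form $M(i+j+k)$ is convenient because it already fits the more restrictive decomposition $b_{ik}+d_{jk}$ of the second bullet of Observation~\ref{obs:shift} (no $a_{ij}$ term is needed), so the reduction applies uniformly to the P3AP and to the $p$-P3AP for all $p\in\{2,\dots,n\}$.
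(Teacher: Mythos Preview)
Your proof is correct and essentially identical to the paper's: both add the shift $(i+j+k)M$ to an arbitrary (respectively, $0/1$) cost array, invoke Observation~\ref{obs:shift} to conclude that the optimization problems are equivalent, and observe that the shift forces monotonicity along every line. Your write-up is in fact slightly more explicit than the paper's in verifying the decomposition $c'_{ijk}=b_{ik}+d_{jk}$ and in checking the monotonicity inequality.
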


\begin{proof}
Let a $n\times n\times p$ cost array $C$ with $p\in\{2,\ldots,n\}$ be given.
We will show that $C=(c_{ijk})$ can be turned into a
triply graded cost array $\widetilde{C}$ such 
that the order of the feasible solutions of the $p$-3PAP 
with respect to the objective function  value does not change.

Let 
$$m\ge |\max_{i,j,k}{c_{ijk}}-\min_{i,j,k}{c_{ijk}}|.$$
Define 
$\widetilde{c}_{ijk}=c_{ijk}+(i+j+k)m$.
It follows from Observation~\ref{obs:shift} that 
the $p$-P3AP instances with cost arrays $C$ and $\widetilde{C}$ are 
equivalent. Moreover, it can be seen easily that $\widetilde{C}$
is monotone increasing along all lines and hence triply graded.
\end{proof}

Note that the same approach also works if the direction of monoticity is not
the same in all three coordinate directions. Next we turn to the case of 
layered Monge arrays.

\begin{theorem}\label{thm:monge1}
The P3AP stays NP-hard on the class of layered Monge cost arrays.
\end{theorem}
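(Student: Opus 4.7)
The plan is to reduce the ordinary P3AP (NP-hard by Frieze~\cite{F83}) to the P3AP on layered Monge arrays via a cost-preserving perturbation that turns every $k$-plane into a Monge matrix. Given an arbitrary $n\times n\times n$ P3AP instance with integer cost array $C$, set $M:=\max_{i,j,k}|c_{ijk}|$ and pick any scalar $L>4M$. Define
\[
\widetilde c_{ijk}\;=\;c_{ijk}\;-\;L(i-1)(j-1).
\]
Since the perturbation depends only on $(i,j)$, we have $\widetilde C=C+C'$ with $c'_{ijk}=a_{ij}$ for $a_{ij}:=-L(i-1)(j-1)$ (taking $B=D=0$ in the notation of Observation~\ref{obs:shift}).

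By Observation~\ref{obs:shift}(i), the P3AP instances with cost arrays $C$ and $\widetilde C$ are equivalent: their objective values differ by the additive constant $\sum_{i,j}a_{ij}$, so the two instances have identical sets of optimal solutions. It then remains to check that $\widetilde C$ is a layered Monge array. For each fixed $k$ and every $1\le i_1<i_2\le n$, $1\le j_1<j_2\le n$, the Monge expression of layer $k$ simplifies to
\[
\widetilde c_{i_1j_1k}+\widetilde c_{i_2j_2k}-\widetilde c_{i_1j_2k}-\widetilde c_{i_2j_1k}
\;=\;\bigl[c_{i_1j_1k}+c_{i_2j_2k}-c_{i_1j_2k}-c_{i_2j_1k}\bigr]\;-\;L(i_2-i_1)(j_2-j_1).
\]
The bracket is bounded in absolute value by $4M$, whereas the penalty term satisfies $L(i_2-i_1)(j_2-j_1)\ge L>4M$. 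Hence the right-hand side is strictly negative, so every $k$-plane of $\widetilde C$ is Monge.

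Because the construction of $\widetilde C$ from $C$ only involves arithmetic on integers polynomially bounded in the input size, this gives a polynomial-time reduction from the general P3AP to the P3AP on layered Monge arrays, yielding the claimed NP-hardness.

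I do not expect any real obstacle here: the entire argument rests on the sum-decomposable-shift Observation~\ref{obs:shift}(i), which in turn is available precisely because $p=n$. The same trick is not at our disposal for the $p$-P3AP with $p<n$, since Observation~\ref{obs:shift} then supplies only $(i,k)$- and $(j,k)$-separable corrections, which cannot by themselves drag an arbitrary layer into the Monge class. The NP-hardness statement for the $p$-P3AP on layered Monge arrays, announced as the main hardness result of the section, will therefore require a genuinely different reduction (presumably from \textsc{DISJOINT MATCHINGS} and built on small-dimensional Monge-friendly gadgets), which is where I would expect the real difficulty in this section to lie.
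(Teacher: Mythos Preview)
Your proof is correct and essentially the same as the paper's: both add an $(i,j)$-only additive term (the paper uses $-(i+j)^2$ on top of a $0$--$1$ array, you use $-L(i-1)(j-1)$ with $L>4M$ for arbitrary integer costs) to force every layer into the Monge class, then invoke Observation~\ref{obs:shift}(i) for equivalence. As an aside, your closing speculation about Theorem~\ref{thm:monge2} is off target---the paper does not return to \textsc{Disjoint Matchings} but instead pads an $n\times n\times n$ P3AP instance to a $2n\times 2n\times n$ layered-Monge $p$-P3AP instance with $p=n$, reusing the same $-(i+j)^2$ trick together with large off-diagonal penalty blocks.
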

\begin{proof}
Consider the $n\times n$ matrix $M=(m_{ij})$ with $m_{ij}=-(i+j)^2$ (or
alternatively $m_{ij}=4n^2-(i+j)^2$ if one prefers to deal with nonnegative cost
arrays). It is easy to check that $M$ is a Monge matrix. Let $F=(f_{ijk})$ be 
the $n\times n\times n$ cost array obtained from $M$ by setting 
$f_{ijk}=m_{ij}$ for all $i,j,k\in\{1,\ldots,n\}$. Since $M$ is a Monge
matrix, $F$ is a layered Monge array.

Let $C$ be an arbitrary $n\times n\times n$ 0-1 array. Consider the $n\times
n\times n$ cost array $C'=(c'_{ijk})$ with $c'_{ijk}=f_{ijk}+c_{ijk}$. It is easy
to check that the layered Monge array property is inherited to $C'$ from $F$.

It follows from Observation~\ref{obs:shift} that the P3AP on cost matrix 
$C$ is equivalent to the P3AP on cost array $C'$. Since the P3AP is NP-hard for
general 0-1 cost arrays it follows that the P3AP stays hard when restricted to
layered Monge arrays.
\end{proof}

\begin{theorem}\label{thm:monge2}
The $p$-P3AP stays NP-hard on the class of layered Monge cost arrays.
\end{theorem}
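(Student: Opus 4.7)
The plan is to derive Theorem~\ref{thm:monge2} directly from Theorem~\ref{thm:monge1}. First, I will observe that the $p$-P3AP specialised to $p = n$ is syntactically identical to the P3AP: the feasible solutions (sets of $n$ pairwise disjoint permutations of $\{1,\ldots,n\}$), the objective function $\sum_{k=1}^n \sum_{i=1}^n c_{i\varphi_k(i)k}$, and the layered Monge condition (every $k$-plane is a Monge matrix) all coincide on an $n \times n \times n$ input. Then, given an arbitrary layered Monge instance $C$ of the P3AP, I will reinterpret $C$ unchanged as an instance of the $p$-P3AP with $p = n$. Since Theorem~\ref{thm:monge1} establishes NP-hardness of the P3AP on layered Monge arrays, NP-hardness transfers at once to the $p$-P3AP on layered Monge arrays (with $p$ viewed as part of the input).

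I foresee no technical obstacle here; the step is essentially a matter of definitions. What is worth highlighting, however, is why I do not attempt to mimic the proof strategy of Theorem~\ref{thm:monge1} at a smaller value of $p$. Adding the Monge baseline $m_{ij} = -(i+j)^2$ to every layer of an arbitrary $0/1$ $p$-P3AP array would still produce a layered Monge array, but the resulting cost shift $\sum_{k=1}^p \sum_{i=1}^n m_{i\varphi_k(i)}$ is constant across feasible solutions only when $p = n$ (only then does every position $(i,j)$ occur exactly once in the solution). For $p < n$ this shift depends on which positions are hit by the $\varphi_k$, so the equivalence of Observation~\ref{obs:shift}(i) is lost. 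The cost shifts that do preserve $p$-P3AP equivalence for $p < n$, namely the $b_{ik} + d_{jk}$ terms from Observation~\ref{obs:shift}(ii), cancel exactly in every $2 \times 2$ Monge inequality and so cannot inject any Monge slack into an otherwise arbitrary $0/1$ layer.

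Consequently, the route through Theorem~\ref{thm:monge1} via the specialization $p = n$ is essentially forced. This also matches the fact that for fixed $p$ the $p$-P3AP on layered Monge arrays will be shown to be polynomially solvable by the dynamic programming algorithm in Section~\ref{sec:algorithm}, which (assuming $\mathrm{P} \neq \mathrm{NP}$) rules out any fixed-$p$ NP-hardness result; the NP-hardness asserted in Theorem~\ref{thm:monge2} is therefore genuinely a statement about $p$ growing with $n$.
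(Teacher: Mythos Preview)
Your argument is correct: since the paper explicitly allows $p\in\{2,\ldots,n\}$ and states that the case $p=n$ coincides with the P3AP, Theorem~\ref{thm:monge1} is literally a special case of the $p$-P3AP on layered Monge arrays, and NP-hardness carries over immediately. Your analysis of why the shift trick of Observation~\ref{obs:shift}(i) breaks for $p<n$, and why the admissible shifts in Observation~\ref{obs:shift}(ii) cannot manufacture Monge slack, is also accurate.

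The paper, however, does not take this shortcut. It reduces an arbitrary $n\times n\times n$ $0$--$1$ P3AP instance to a $p$-P3AP instance of dimension $2n\times 2n\times n$, so that $p=n$ is \emph{strictly} smaller than the new side length $n'=2n$. This is achieved by embedding the Monge matrix $M=(-(i+j)^2)$ into a $2n\times 2n$ Monge matrix $M'$ whose off-diagonal blocks carry large positive weights, forcing any optimal $p$-P3AP solution to decompose into the original $n\times n$ square and its complement. What this buys is a hardness statement for the ``genuine'' regime $p<n'$ (indeed $p=n'/2$), rather than merely at the boundary $p=n'$ where the problem degenerates to the P3AP. The theorem as stated does not demand this, so your proof suffices; but the paper's construction makes the later remark in Section~\ref{sec:struct}---that the hard instances have $p$ of order $n$ rather than $p=n$ exactly---slightly less tautological, and it illustrates a padding technique that separates $p$ from the array dimension.
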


\begin{proof}
The proof builds on the idea used in the proof of Theorem~\ref{thm:monge1}.
We make again use of the $n\times n$ Monge matrix $M=(m_{ij})$
with $m_{ij}=-(i+j)^2$. Let $n'=2n$ and expand
$M$ into an $n'\times n'$ matrix $M'$ as follows
$$
M'=\begin{pmatrix}
	M & Y \\
	Y^t & Z\\
\end{pmatrix}
$$
where $Z$ is the $n\times n$ zero matrix and 
$Y=(y_{ij})$ is an $n\times n$ matrix 
with $y_{ij}=i\cdot n$ for all $i,j\in \{1,\ldots,n\}$.
It is easy to check that $M'$ is again a Monge matrix.

Let an instance of the P3AP with an $n\times n\times n$ 0-1 cost array $C$ 
be given.  Expand $C$ into an $n'\times n'\times n$ array $\widehat{C}=
(\widehat{c}_{ijk})$ by
defining 
$$\widehat{c}_{ijk}=
\begin{cases} 
c_{ijk} & \text{for }  i,j,k\in\{1,\ldots,n\}\\
0 & \text{for } i,j\in\{n+1,\ldots,n'\}, k\in\{1,\ldots,n\}
\end{cases}
$$
Now create the $n'\times n'\times n$ cost arrays $F'=(f'_{ijk})$ and 
$C'=(c'_{ijk})$ with 
$$f'_{ijk}=m'_{ij} \quad \text{ for all } i,j\in\{1,\ldots,n'\}, k\in  \{1,\ldots,n\}
$$
and
$$c'_{ijk}=f_{ijk}+\widehat{c}_{ijk}  \quad \text{ for all } i,j\in\{1,\ldots,n'\}, 
k\in  \{1,\ldots,n\}.
$$

It is easy to check that  $C'$ is a layered Monge array.
Now consider the instance $I$ of the  $p$-P3AP with cost array $C'$ array and 
$p=n$.

Note that the cost entries in the $Y$ and the $Y^t$ block of $M'$ (which carry
over to the $k$-planes of $F'$ and $C'$) are positive numbers $\ge
n$ while the entries in $Z$ are zero and the entries in $M$ are negative.
Thus an optimal solution of the $p$-P3AP will contain only
elements $(i,j,k)$ for which either $i,j\in\{1,\ldots,n\}$ or
$i,j\in\{n+1,\ldots,n'\}$ holds when such solutions exist (which is easily seen
to be the case).

Hence an optimal solution of the P3AP with cost array $C$ can be obtained 
from an optimal solution of the $p$-P3AP instance $I$  
by dropping all 3-tuples in the solution that involve indices $>n$ .
The result now follows from the NP-hardness of the P3AP for 0-1 cost arrays.
\end{proof}

Note that the cost arrays used in the hardness reductions in the proofs for
Theorems~\ref{thm:monge1} and~\ref{thm:monge2} do not belong to the class of
Monge arrays. The complexity status of the P3AP and of the $p$-P3AP on the
class of Monge arrays and its subclass distribution arrays remains open.


\section{The structure of the optimal solution of the 
$\mathbf p$-P3AP on layered Monge arrays}\label{sec:struct}

Note that in the hard instance constructed in the proof of 
Theorem~\ref{thm:monge2} 
the number of layers $p$ is of the order of $n$. This leaves the complexity
status of the $p$-P3AP for layered Monge arrays unsettled when $p$ is a 
constant. In this section we present results on the structure of the optimal
solution of the $p$-P3AP on layered Monge arrays. The bandwidth result
shown in Theorem~\ref{thm:band}  will be used
in Section~\ref{sec:algorithm} to provide an algorithm that solves the
$p$-P3AP on layered Monge arrays in polynomial time if $p$ is fixed.

\subsection{Block structure result for the 2-P3AP}

In this subsection we will prove that there always
exists an optimal solution of the 2-P3AP on layered Monge arrays which fulfills
a nice block structure. To formulate this result and to prove it
we will need the following  definitions. It will be convenient to 
represent the feasible solutions of the 2-P3AP by Latin rectangles, 
cf.\ Section~\ref{subsec:basics}.

\begin{definition}
A partition of a Latin rectangle into {\em blocks\/} is a partition 
of the columns into sets of adjacent 
columns such that such each set of $m$ columns contains only $m$ 
different integers, and is minimal with respect to that property. 
A block which spreads from $k$-th to the $l$-th column, $k<l$ and 
contains only integers from the set $\{k,\ldots,l\}$ 
is called {\em normalized block\/}.
\end{definition}

As an illustrative example consider the $2\times 12$ Latin rectangle 
below. It is partitioned into 4 blocks. The second block which contains the
numbers 3, 4 and 5 is the only normalized block.

$$\begin{tabular}{|cc|ccc|cccc|ccc|}\hline 6 & 2 & 4& 
3 & 5& 8& 11& 12& 1& 7& 10& 9\\ \hline 2& 6& 
 3& 5& 4&12& 1& 11&8&10& 9& 7\\ 
\hline 
\end{tabular}$$

Note that if the block $B$ is a normalized block which consists of 
 the $m$ columns $j,\ldots, j+m-1$ the following property is 
fulfilled for all $i=1,\ldots,m-1$
\begin{itemize}
\item [(*)]
The first $i$ columns of $B$ contain an integer $x>i+j$ and the
last $i$ columns of $B$ contain an integer $x<j+m-i$.
\end{itemize}

The local operation of a swap that exchanges two integers in a row of an
$p\times n$ Latin rectangle will play a fundamental role in what follows.

\begin{definition}
Let a feasible solution $x$ of the $p$-P3AP be given and let 
$r,q\in \{1,\ldots,n\}$, $r<q$. The operation that exchanges the 
positions of $r$ and $q$ in a row $k$ of the Latin rectangle 
representation of $x$ is referred to as
a {\em swap} and denoted as {\sc SWAP}$(r,q,k)$. 
The swap is called {\em feasible\/} if the newly obtained
solution  is feasible. The swap is called  {\em non-increasing\/} 
if the newly obtained obtained solution $x'$ has cost $c(x')\le c(x)$.
\end{definition}

\begin{observation}
Let $C$ be an $n\times n\times p$ layered Monge array. 
Then the swap {\sc SWAP}$(r,q,k)$ is non-increasing if $r$ is placed to the
right of $q$.
\end{observation}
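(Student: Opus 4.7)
The plan is to unpack the definition of \textsc{SWAP}$(r,q,k)$ in terms of the underlying $x$-variables and then invoke the Monge property of the single layer $M^k$.

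First I would translate the swap into cost language. Let $j_r$ and $j_q$ be the columns in row $k$ of the Latin rectangle representation that contain the integers $r$ and $q$ before the swap, so that $x_{rj_rk}=x_{qj_qk}=1$. After performing \textsc{SWAP}$(r,q,k)$ the only cells of $x$ that change in layer $k$ are these two, with $x'_{rj_qk}=x'_{qj_rk}=1$ instead. Hence the cost difference is
\begin{equation*}
c(x')-c(x)\;=\;\bigl(c_{rj_qk}+c_{qj_rk}\bigr)-\bigl(c_{rj_rk}+c_{qj_qk}\bigr),
\end{equation*}
so it suffices to show that this quantity is $\le 0$.

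Now I would use the hypotheses. By the convention in the definition of \textsc{SWAP}, $r<q$, and the assumption that $r$ stands to the right of $q$ in row $k$ gives $j_q<j_r$. Applying the Monge inequality to the layer matrix $M^k=(m^k_{ij})=(c_{ijk})$, which is a Monge matrix by hypothesis on $C$, with row indices $r<q$ and column indices $j_q<j_r$, yields
\begin{equation*}
m^k_{rj_q}+m^k_{qj_r}\;\le\;m^k_{rj_r}+m^k_{qj_q},
\end{equation*}
which is precisely $c_{rj_qk}+c_{qj_rk}\le c_{rj_rk}+c_{qj_qk}$. Hence $c(x')\le c(x)$ and the swap is non-increasing.

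There is no real obstacle here: the statement is a direct rewriting of the two-dimensional Monge inequality in the fixed layer $k$, once the effect of the swap on $x$ is written out. The only thing to be careful about is not to confuse rows of the Latin rectangle (indexed by $k$) with rows of the layer matrix $M^k$ (indexed by $i$), and to notice that the observation makes no claim about feasibility of the swap, only about its cost.
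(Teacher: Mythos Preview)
Your proof is correct and follows essentially the same route as the paper: both write out the effect of the swap on the two affected entries in layer $k$ and then apply the Monge inequality for $M^k$ with row indices $r<q$ and column indices $j_q<j_r$. Your additional remarks about feasibility and about distinguishing Latin-rectangle rows from matrix rows are accurate and in line with the paper's parenthetical ``(not necessarily feasible)''.
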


\begin{proof}
Let $q$ be placed in column $j$ of row $k$ and $r<q$ be placed in column $s>j$,
i.e.\ $x_{qjk}=1$ and $x_{rsk}=1$.
Since $C$ is a layered Monge array, the following property is fulfilled
$c_{rjk}+c_{qsk}\leq c_{rsk}+c_{qjk}$. It follows that exchanging the position
of $r$ and $q$ in row $k$ will lead to a new (not necessarily feasible)
solution $x'$ with $x'_{rjk}=1$ and $x'_{qsk}=1$ such that $c(x')\le c(x)$.
\end{proof}

$$ r<q\quad \Longrightarrow\quad \begin{tabular}{|c|c|c|c|c|c|}\hline  \ \ 
&$r$&\ \ &$q$&\ \ &\ \ \\ \hline \ &\ &\ &\ &\ &\ \\ \hline \end{tabular} \ 
\leq \ \begin{tabular}{|c|c|c|c|c|c|}\hline  \ \ &$q$&\ \ &$r$&\ \ &\ \ \\ 
\hline \ &\ &\ &\ &\ &\ \\ \hline \end{tabular}$$

\begin{proposition}\label{prop:2layers}
For the 2-P3AP with an $n\times n\times 2$ cost array $C$ which is a layered
Monge array, there always exists an optimal solution such that its
corresponding Latin rectangle decomposes into normalized blocks 
with 2 or 3 columns.
\end{proposition}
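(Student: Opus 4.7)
The plan is to exploit the observation made just before the proposition: on a layered Monge array, a swap SWAP$(r,q,k)$ with $r<q$ and $r$ positioned to the right of $q$ in row $k$ is non-increasing in cost. Among all optimal solutions I would pick one, $x^{\ast}$, whose $2\times n$ Latin rectangle $L$ minimises the total number of inversions, summed over both rows. Any feasible non-increasing swap strictly decreases this potential (the pair $(r,q)$ is no longer inverted) while preserving optimality, so $x^{\ast}$ admits no feasible non-increasing swap. It then suffices to show that a $2\times n$ Latin rectangle with disjoint rows and no feasible inversion swap decomposes into normalised blocks of size $2$ or $3$.

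Step~1 (every block of $L$ is normalised). Let $B$ be the leftmost non-normalised block of $L$, on columns $[j,j+m-1]$. All blocks strictly left of $B$ are normalised and exhaust the integers $\{1,\ldots,j-1\}$, so $I_B\subseteq\{j,\ldots,n\}$; non-normalisation forces $q:=\max I_B>j+m-1$. Pigeonholing in row~$1$ gives an integer $r<q$ whose row-$1$ position exceeds $c_q^{(1)}$, so $(r,q)$ is an inversion in row~$1$. If $r\notin I_B$, then $c_r^{(1)}\in[j+m,n]$ and both forbidden collisions for SWAP$(r,q,1)$ are excluded automatically (the row-$2$ entries at $c_q^{(1)}$ and $c_r^{(1)}$ lie in $I_B$ and in its complement, respectively), so the swap is feasible, contradicting the choice of $x^{\ast}$. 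Otherwise every such $r$ lies in $I_B$, placing $q-c_q^{(1)}$ distinct integers at distinct row-$1$ positions in the window $(c_q^{(1)},j+m-1]$ of $B$; this forces $q\le j+m-1$, again a contradiction.

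Step~2 (every block has at most three columns). Suppose $B$ is a minimal normalised block of size $m\ge 4$. A direct calculation shows that SWAP$(r,q,k)$ is infeasible if and only if $\{r,q\}=\{u_c,v_c\}$ for some column $c$ of $B$; I call such pairs \emph{column pairs} and the remaining pairs in $\binom{I_B}{2}$ \emph{free pairs}. There are at most $m$ column pairs, hence at least $\binom{m}{2}-m=m(m-3)/2\ge 2$ free pairs. For $x^{\ast}$ to admit no feasible inversion swap, every free pair $\{r,q\}$ with $r<q$ must have $r$ left of $q$ in both rows. I would then introduce the derangement $\tau$ of $I_B$ defined by $\tau(u_c)=v_c$: its cycles determine the column pairs, and the required two-sided consistency of free pairs translates into the condition that both $u$ and the composition $\tau\circ u$ are linear extensions of the partial order given by the free pairs. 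A case analysis on the cycle type of $\tau$, combined with property (*) (which pins down the positions of the extreme integers $j$ and $j+m-1$ inside $B$), shows that this condition is incompatible with the minimality of $B$: either the partial order acquires a cycle once pulled back through $\tau$, or the required positional constraints in rows~$1$ and~$2$ cannot simultaneously be satisfied. Either way, some free pair must be inverted in some row, yielding a feasible non-increasing swap and contradicting the minimality of inversions in $x^{\ast}$.

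The main obstacle is the case analysis in Step~2. The reformulation \emph{infeasible swap $\Leftrightarrow$ shared column pair} is clean, but showing that the free pairs cannot all be consistently ordered in both rows requires tracking the cycle decomposition of $\tau$ (which can consist of several $2$-cycles or of longer cycles, each contributing a different number of column pairs) together with the positional constraints imposed by property (*). Once that is done, the remainder of the proof is a routine consequence of the minimum-inversion choice of $x^{\ast}$.
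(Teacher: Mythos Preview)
The paper does not present its proof of this proposition; it only remarks that the argument ``is based on an extensive case distinction and is unfortunately not elegant'' and defers the details to a later version. A line-by-line comparison is therefore impossible, and I can only assess your outline against that hint.

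Your global strategy is sound and in the same spirit as what the paper indicates: pick an optimal solution minimising the total inversion count over both rows, and show that any violation of the claimed structure permits a feasible non-increasing swap that strictly reduces inversions. Step~1 is correct as written: the pigeonhole bound of at least $q-c_q^{(1)}$ smaller integers sitting to the right of $q$ in row~1, together with the feasibility check when some such $r$ lies outside $I_B$, does force every block of an inversion-minimal optimum to be normalised.

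The substantive gap is Step~2. Your reformulation there is clean and correct: for $r,q\in I_B$ the swap \textsc{Swap}$(r,q,k)$ is infeasible precisely when $\{r,q\}$ is the content of some column of $B$, so inversion-minimality forces every free pair to be consistently ordered in both rows; and block minimality translates exactly into ``no proper prefix of $u$ is $\tau$-invariant''. But the sentence ``a case analysis on the cycle type of $\tau$ \ldots\ shows that this condition is incompatible with the minimality of $B$'' is precisely where the proof lives, and you have not supplied it. Already for $m=4$ one must handle nine derangements, and they do not behave uniformly: the six $4$-cycles and two of the double transpositions make the order constraints on $u$ and $\tau\circ u$ self-contradictory, whereas $\tau=(12)(34)$ (in block coordinates) admits such a $u$ and is excluded only by the minimality condition. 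For general $m\ge 4$ you need either a uniform argument across all cycle types or an induction that handles the interaction between the free-pair order and $\tau$-invariant prefixes, and nothing in your sketch indicates how that goes. As it stands, the proposal sets up the right invariant and isolates the correct combinatorial core, but stops exactly where the ``extensive case distinction'' the paper alludes to would have to begin.
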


Our proof  is based on an extensive case distinction and is 
unfortunately not elegant. To improve the readability of the paper, we have
decided to move the proof to an appendix (which will be included in the second
version of the paper).

For the special case of distribution arrays 
we have a more elegant proof which is based on the fact that for distribution 
arrays there exists an explicit formula for the cost of a feasible 
solution which in terms of the entries of the density matrix. 
As the result above is more general, we decided
to omit the proof for the special case.

\subsection{Example with a single large block for $\mathbf{p=3}$}

It would be nice if the block structure result for the 2-P3AP from
Proposition~\ref{prop:2layers} carried over to the 3-P3AP. Unfortunately, this
is not the case, not even for the subclass of distribution arrays 
as is demonstrated by the following counterexample in which the unique optimal
solution consists of a single large block.

Let $n=10$ and $p=3$. We consider the distribution matrix $C$ which is
generated by the matrix $P=(p_{ijk})$ with 
$$	 p_{ij1}=\left\{\begin{array}{r@{\,\quad }l}
		100 & \textrm{if $i=j=7$}\\
		1 & \textrm{otherwise}
	\end{array}\right.\quad
         p_{ij2}=\left\{\begin{array}{r@{\,\quad }l}
		10a & \textrm{if $i=4$ and $j=5$}\\
		10a^3 & \textrm{if $i=9$ and $j=10$}\\
		a_j & \textrm{otherwise}
	\end{array}\right.
$$
and $p_{ij3}=a^a$ for all $i,j\in\{1,\ldots,10\}$
where $a_n=(1,1,a,a,a,a^2,a^2,a^3,a^3,a^3)$ and $a$ is some large number.
For notational convenience let us refer to the $10\times 10$ matrix that 
corresponds to the $k$-plane of $P$ as $Q_k$.

We claim that the following $3\times 10$ Latin rectangle $L$ 
\begin{equation}\label{eq:opt}
	\begin{tabular}{|cccccccccc|}
		\hline 3&4&1&2&6&5&8&10&7&9 \\ \hline
		2&1&4&5&3&7&6&9&10&8 \\\hline
		1&2&3&4&5&6&7&8&9&10 \\ \hline
	\end{tabular}
\end{equation}
is the unique optimal solution for the 3-P3AP instance with cost array $C$.
This can be checked by computational means. Below we provide the rationale
behind why $L$ shows up as unique optimal solution.

Consider the $10\times 10\times 2$ subarray $C_2$ of $C$ 
where the the $k$-plane of $C$ with $k=1$ is dropped.
We claim that the following Latin rectangle provides an optimal solution of the
2-P3AP on $C_2$ 
\begin{equation}\label{eq:3sol}
	\begin{tabular}{|cc|ccc|cc|ccc|}
		\hline 2&1&4&5&3&7&6&9&10&8 \\
		\hline 1&2&3&4&5&6&7&8&9&10 \\ \hline
	\end{tabular}
\end{equation}
This is easy to check. Note that the entries of $Q_3$ are much larger than the
entries in $Q_2$ and $Q_1$. It can easily be checked that it thus pays off
to choose the overall best permutation for any Monge matrix, namely the 
identity permutation, as solution for the $k$-plane with $k=3$. Choosing any
other permutation more will be lost than can be won for the $k$-planes with
$k=1$ and $k=2$. Hence the second row of \eqref{eq:3sol} is the
identity permutation. 

As a consequence of the structure of the vector $a_n$ and its role in $Q_3$
\eqref{eq:3sol} splits into blocks of sizes 2,3,2 and 3 respectively.
The two entries of $Q_2$ which involve the multiplicative factor 10  
determine the type of the two blocks of size 3 in \eqref{eq:3sol}.

 Since $Q_1$ has much smaller entries than $Q_2$ and $Q_3$, the optimal solution
 of the $10\times 10\times 3$ P3AP with cost array $C$ will have
 \eqref{eq:3sol} as last two rows. This is  true since every set of 
Latin rows can be completed to Latin square~\cite{H45}. 

Because $p_{1,7,7}=100$, the rightmost four columns of the first row in
the optimal Latin rectangle $L$ will be filled with integers 
greater or equal than 7, if possible. There indeed exist three ways to achieve
this goal and we take the one with the lowest cost, i.e., the block
(8,10,7,9), and fill the rest of the first row of the Latin rectangle in 
an obviously unique optimal manner. The Latin  rectangle $L$ results.
\medskip

As the Latin rectangle $L$ consists of a single block of size $n=10$, this
destroys any hope for a result along the lines of
Proposition~\ref{prop:2layers} for $p\ge 3$.

The same approach can be used to construct examples where an 
arbitrarily large block arises in the optimal solution. Just 
add an arbitrary number of $2\times2$ blocks in the middle of the 
candidate solution \eqref{eq:3sol}.

The construction above can be carried over $p>3$. Add
more $k$-planes on top of $P$ with much smaller values than in the first
three $k$-planes. Then the optimal solution will have a 
Latin rectangle representation such that the first three rows are as in the
example provided for $p=3$. (Use Hall's theorem~\cite{H45}.)

\subsection{Bandwidth result for the  $\mathbf p$-P3AP}

Recall that for the linear assignment problem on a Monge matrix the identity
permutation provides an optimal solution, cf.\ Observation~\ref{obs:mongeass}.

A feasible solution of the $p$-P3AP is a set of $p$ pairwise 
disjoint permutations (assignments). Hence, one might expect that 
for the $p$-P3AP on a layered Monge array the filled cells of the 
partial Latin square representing the optimal 
solution tend to group around the main diagonal. It is shown below 
that this is indeed the case.


\begin{theorem}\label{thm:band}
Let an  instance $I$ of the $p$-P3AP with the $n\times n\times p$ layered Monge 
cost array $C$ be given. There exists an optimal solution $x$ for $I$
such that the partial Latin sq<uare $L$ that represents $x$ has 
the following property:
\begin{itemize}
\item[(P)] Whenever the cell $L(i,j)$ is filled, we have 
$|i-j|\leq 2p-2$.
\end{itemize}
\end{theorem}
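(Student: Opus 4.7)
The plan is a minimality argument that combines the non-increasing swap Observation with a pigeonhole count on conflicting cells of the partial Latin square. Among all optimal solutions of the $p$-P3AP on $C$ I would pick one $x^*$ that minimizes the secondary potential
\[
\Phi(x)=\sum_{k=1}^{p}\sum_{i=1}^{n}\bigl(\varphi_k(i)-i\bigr)^2,
\]
and then derive a contradiction from any violation of (P). Concretely, suppose some entry $\varphi_{k_0}(i_0)=j_0$ of $x^*$ satisfies $j_0-i_0\geq 2p-1$ (the symmetric case $i_0-j_0\geq 2p-1$ is handled by an identical argument with ``right'' and ``left'' interchanged). My goal is to exhibit a single feasible non-increasing swap in row $k_0$ of the Latin rectangle that strictly lowers $\Phi$, contradicting the choice of $x^*$.

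First I would show that many candidate swaps exist. In row $k_0$ the $j_0-1$ positions to the left of $j_0$ carry $j_0-1$ distinct values, all different from $i_0$; at most $i_0-1$ of these lie in $\{1,\ldots,i_0-1\}$, so at least $(j_0-1)-(i_0-1)=j_0-i_0\geq 2p-1$ of these positions carry values $v>i_0$. For each such $v$ at some position $j_v<j_0$ the Observation says that {\sc SWAP}$(i_0,v,k_0)$ is non-increasing in cost (since $i_0<v$ and $i_0$ lies to the right of $v$); and a short expansion of $\Phi$ yields $\Delta\Phi=2(i_0-v)(j_0-j_v)<0$, so this swap strictly decreases $\Phi$ whenever it is feasible.

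By minimality of $\Phi$ each of these at least $2p-1$ swaps must then be infeasible, i.e., performing it would duplicate a value in some column of the Latin rectangle. For a given $v$ this can happen in exactly two ways: (Type A) the cell $(v,j_0)$ of the partial Latin square is already filled in some row $k'\neq k_0$, so that column $j_0$ would end up with $v$ twice after the swap; or (Type B) the cell $(i_0,j_v)$ is already filled in some row $k''\neq k_0$, so that column $j_v$ would end up with $i_0$ twice. Column $j_0$ of the partial Latin square has exactly $p$ filled cells, of which $p-1$ are distinct from $(i_0,j_0)$, and each such cell is a Type A blocker for exactly one candidate (the one whose index $v$ equals its row). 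Symmetrically, row $i_0$ has $p-1$ filled cells other than $(i_0,j_0)$, and each is a Type B blocker for exactly one candidate (the one whose index $v$ is the value sitting at that column in row $k_0$ of the Latin rectangle).

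Hence at most $(p-1)+(p-1)=2p-2$ of the $\geq 2p-1$ candidate swaps can be blocked, so at least one is feasible; performing it yields an optimal solution with strictly smaller $\Phi$, contradicting the choice of $x^*$. The main technical obstacle is the sharpness of the blocker count: I need to check that each blocker cell pins down the other endpoint of the swap uniquely, so that the two bounds $p-1$ can really be added to give $2p-2$ rather than something looser. Once this bookkeeping is in place, the proof reduces to the pigeonhole step on row $k_0$ together with the Monge swap lemma, and the $2p-2$ bandwidth bound falls out directly from $2p-1>2(p-1)$.
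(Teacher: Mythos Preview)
Your proof is correct and follows essentially the same combinatorial route as the paper: the same $2\times 2$ exchange in the partial Latin square (equivalently, a swap in row $k_0$ of the Latin rectangle), the same lower bound $j_0-i_0$ on the number of candidate swaps, and the same $(p-1)+(p-1)$ upper bound on the number of blockers coming from the other filled cells in row $i_0$ and column $j_0$.

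The only difference is in the framing of the extremal argument. The paper picks the filled cell farthest from the diagonal as a ``pivotal cell'' and uses its extremality to make the upper-right quadrant empty, from which it deduces that the value $k$ occurs \emph{exactly} $j-i$ times in the lower-left quadrant; it then (implicitly) iterates the move until no violating cell remains. Your potential $\Phi(x)=\sum_{k,i}(\varphi_k(i)-i)^2$ replaces this with a single-shot minimality argument: you do not need the pivotal cell to be extremal to get the \emph{at least} $j_0-i_0$ count (your pigeonhole on the values $<i_0$ is self-contained), and the strict decrease $\Delta\Phi=2(i_0-v)(j_0-j_v)<0$ handles termination automatically. This is a slightly cleaner packaging of the same idea; the paper's version, in turn, makes the geometric picture of pushing entries toward the diagonal more explicit.
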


\begin{proof} 
Consider an optimal solution $x$ and its representation as partial 
Latin square $L$. We start at the upper right corner and shift a line parallel 
to the main diagonal towards the center of the partial Latin square 
until one hits for the first time a filled cell. Choose such a cell and
call it \emph{pivotal cell}. Suppose that the pivotal cell is $L(i,j)$ and
contains the integer $k$, see Figure~\ref{fig:sw} for an illustration.  
\begin{figure}[h]
	\centering
	\includegraphics{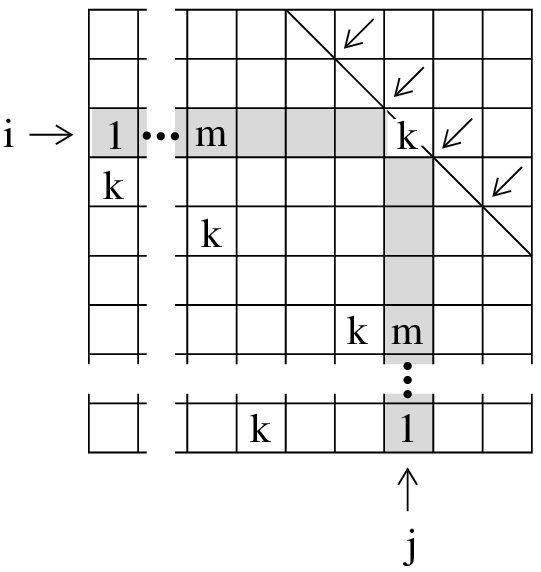}
	\caption{Pushing elements closer to the diagonal}
	\label{fig:sw}
\end{figure}
We define four sub-rectangles of the partial Latin square $L$ as follows
depending in which of the four quadrants around $L(i,j)$ the cells are located.
\begin{itemize}
\item $Q_{\text{la}}$: Contains all cells $(r,q)$ with $r<i$ and $q<j$.
\item $Q_{\text{lb}}$: Contains all cells $(r,q)$ with $r>i$ and $q<j$.
\item $Q_{\text{ra}}$: Contains all cells $(r,q)$ with $r<i$ and $q>j$.
\item $Q_{\text{rb}}$: Contains all cells $(r,q)$ with $r>i$ and $q>j$.
\end{itemize}
Let us refer to $Q_{\text{lb}}$ as {\em candidate area}.
If there exists a cell $L(q,r)$ in the candidate area that is filled 
with the integer $k$ while the cells $L(i,r)$ and $L(q,j)$ are empty, we can 
fill the cells $L(i,r)$ and $L(q,j)$ with the integer $k$ and 
restore the cells $L(i,j)$ and $L(q,r)$ to the empty state by deleting $k$ from
them. The new solution $x'$ is feasible again and since
each 3-plane of the cost array $C$ is a Monge matrix, it follows
that 
$$
c_{irk}+c_{qjk}\le c_{ijk}+c_{qrk},
$$
and hence the cost does not increase if we move to the new solution.

Next we examine for which $i$ and $j$ it is always possible to perform a move
of the type described above. First we count how many cells in the candidate
area are filled with the integer $k$. Since there are no filled cells 
in $Q_{\text {ra}}$ by assumption,
it follows that $k$ occurs in $Q_{\text{la}}$ exactly $i-1$ times.
Analogously, it follows that $k$ occurs in $Q_{\text{rb}}$ exactly $n-j$ times.
Consequently, $k$ occurs in the the candidate area exactly 
$n-(i-1)-(n-j)-1=j-i$ times.

Note that filled cells in row $i$ left of the pivotal cell
and filled cells in column $j$ below of the pivotal 
element (see the gray area in Figure~\ref{fig:sw})
can lead to the in the situation
that a candidate integer $k$ cannot be used to perform a move of the
type described above. Note that there are exactly $p-1$ filled cells in row $i$
left of $L(i,j)$ and exactly $p-1$ filled cells in column $j$ below $L(i,j)$.
Therefore, if \[ j-i>2p-2, \] holds, we always can choose an integer such the
performed move is feasible.

Similarly, if we consider the elements below the diagonal,
we get that a feasible move can be performed  if 
$i-j> 2p-2$. Therefore there always exists an optimal solution 
such that for all filled cells $L(i,j)$ we have 
\[ |i-j|\leq 2p-2. \]
\end{proof}

Note that property (P) implies that the partial Latin square $L$ is a matrix
with bandwidth $\le 4p-3$. It is natural to ask whether this bound is tight.
For $p=2$ and odd $n$ it is easy to see that this is the case. It suffices to 
choose the two 3-planes of the cost array $C$ such that in the optimal partial
Latin square all diagonal entries will be filled by 1-entries which is easy to
achieve.

We conjecture that the bound $4p-3$ is tight for infinitely many $p$. 
More specifically, we have constructed a class of instances  for which we
conjecture that the bound is achieved when $p$ is prime. We have checked the
correctness for values of $p$ up to 31 by computational means. Work on the
proof is in progress.

\section{A dynamic programming algorithm for the 
$\mathbf p$-PLAP on layered Monge arrays}\label{sec:algorithm}

Based on Theorem~\ref{thm:band} we can devise a dynamic programming algorithm 
that finds an optimal solution of the $p$-P3AP on layered Monge arrays. The
algorithm's running time will be linear in $n$ and exponential in $p$. 

Our dynamic programming algorithm will start with the empty partial Latin
square and will then fill the rows of the partial Latin square with 
integers iteratively, going from the top to the bottom. 
Integers will be placed in the band around the diagonal according to 
Theorem~\ref{thm:band} while at the same time taking the feasibility aspect
into account.

The state space of our dynamic program is defined as follows.
Each state corresponds to a partial Latin square. 
Let $\mathcal{S}_i$ denote the set of states built in the $i$-th step, that is, 
the set of partial Latin squares with the first $i$ rows filled with integers 
1 to $p$ while the remaining rows are empty. 
Furthermore, $\mathcal{S}_i$ will fulfill the following additional properties:
\begin{enumerate}
	\item[i)] The partial Latin squares that correspond to a state in 
$\mathcal{S}_i$ contain exactly $p$ integers in the columns 1 to $i+2p-2$ 
and no integers in the columns $i+2p-1$ to $n$. 
	\item[ii)] To every state $S$ from $\mathcal{S}_i$ we associate a $(4p-4)$-tuple of subsets of $\{1,\ldots,p\}$ that correspond to integers that occur in columns  $i-2p+3$ to $i+2p-2$ of $S$, respectively. No two states in $\mathcal{S}_i$ have the same corresponding $(4p-4)$-tuples.
\end{enumerate}
We start the algorithm with $\mathcal{S}_0=\{S_0\}$, where $S_0$ is an empty
$n\times n$ table. In the $i$-th step, for each state $S_{i-1}$ from
$\mathcal{S}_{i-1}$ do the following. Place integers 1 to $p$ into the columns
$i-2p+2$ to $i+2p-2$ of the row $i$, in every possible way. Let $S_{i}$ be one
such table. If $S_{i}$ is not a partial Latin square or does not satisfy the
condition i),  discard it. Otherwise, check the 
$(4p-4)$-tuple defined in ii). If there is no state in $\mathcal{S}_i$ with the
same corresponding $(4p-4)$-tuple, put $S_i$ into $\mathcal{S}_i$. If there 
already exists such state, then compare the corresponding cost values 
of the two states, and put in $\mathcal{S}_i$ only the state  with 
the smaller cost. In doing so, condition ii) will be ensured. At the 
end of the step $n$, $\mathcal{S}_n$ will consist of only one state, which will
correspond to an optimal solution. 

The size of $\mathcal{S}_i$ does not depend on $n$. Also, the number of ways
one can fill a new row, and the time needed to check whether there already 
exists a state in $\mathcal{S}_i$ with the same $(4p-4)$-tuple than 
RyRythe $(4p-4)$-tuple
of the currently considered partial solution.
 does not depend on $n$. Hence the time complexity of the algorithm is 
$O(f_pn)$, where $f_p$ is an exponential function in $p$. 
Note that to build $\mathcal{S}_i$ we only need
$\mathcal{S}_{i-1}$, hence space complexity does does not depend on $n$ either.

\begin{corollary}\label{cor:fixedp}
The $p$-P3AP for layered Monge cost arrays is fixed parameter tractable with
respect to the parameter $p$.
\end{corollary}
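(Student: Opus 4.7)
The plan is to verify that the dynamic program outlined above indeed runs in time $f(p)\cdot\operatorname{poly}(n)$, which is precisely the definition of fixed parameter tractability with respect to $p$. The starting point is Theorem~\ref{thm:band}: it guarantees the existence of an optimal partial Latin square $L$ whose filled cells lie in a band of width $4p-3$ around the main diagonal. Hence the DP only has to consider solutions satisfying property (P), and the ``active region'' that can interact with row $i$ consists of the $4p-4$ columns in the band that have not yet been completely decided.

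First I would establish correctness of the state abstraction. The argument is that when we are about to process row $i+1$, the only information from previous rows that can still influence feasibility of future choices is the identity of those integers already placed in columns $i-2p+3,\ldots,i+2p-2$; columns further to the left are guaranteed by property (P) to receive no further entries, while columns further to the right will be dealt with in later steps. Thus two partially filled Latin squares in $\mathcal{S}_i$ with identical $(4p-4)$-tuples are ``future-equivalent'': any completion of one yields a feasible completion of the other and vice versa, with identical additional cost. The standard DP exchange argument then lets us discard the more expensive of any two future-equivalent partial solutions without losing optimality.

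Next I would bound the complexity. The number of $(4p-4)$-tuples of subsets of $\{1,\ldots,p\}$ is at most $(2^{p})^{4p-4}=2^{p(4p-4)}$, so $|\mathcal{S}_i|\le 2^{p(4p-4)}$ for every $i$. In each step, for a given state, the number of ways to place $p$ distinct integers from $\{1,\ldots,p\}$ into the at most $4p-3$ band positions of row $i+1$ is at most $\binom{4p-3}{p}\,p!$, and each candidate filling can be checked for compatibility with property (P), for row/column Latin-ness, and for the signature update in time bounded by a function of $p$ alone. Consequently one step of the DP takes $g(p)$ time for some function $g$, and the total running time is $O(g(p)\cdot 2^{p(4p-4)}\cdot n)=f(p)\cdot n$ for an appropriate exponential function $f$.

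The main obstacle is the first step: one must argue carefully that the $(4p-4)$-tuple is a genuinely sufficient summary, i.e.\ that no information about older rows or earlier columns is ever needed again. This reduces to checking two things: (a) by Theorem~\ref{thm:band} and condition i) of $\mathcal{S}_i$, no future row will place an integer in columns $\le i-2p+2$, so their contents are irrelevant to all future feasibility checks; (b) the cost contribution of past rows is already absorbed into the value stored with the state, so the merge rule that keeps the cheaper state is valid. Once these two points are verified, the remainder is a routine running-time calculation and Corollary~\ref{cor:fixedp} follows.
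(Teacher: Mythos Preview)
Your proposal is correct and follows essentially the same approach as the paper: both rely on Theorem~\ref{thm:band} to restrict attention to banded partial Latin squares, use the $(4p-4)$-tuple of column contents as the DP state, and argue that the per-step work is bounded by a function of $p$ alone, yielding an $O(f(p)\,n)$ algorithm. Your write-up is in fact more explicit than the paper's, since you give concrete bounds such as $|\mathcal{S}_i|\le 2^{p(4p-4)}$ and $\binom{4p-3}{p}\,p!$ for the number of row fillings, and you spell out the future-equivalence exchange argument that the paper leaves implicit.
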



\section*{Acknowledgement}
This research was supported by the Austrian Science Fund (FWF): 
W1230, Doctoral Program ``Discrete Mathematics''.

\bibliographystyle{plain}

\begin{appendix}
\section{Appendix: Proof for Proposition~\ref{prop:2layers}}

The proof will be part of version 2.

\end{appendix}

\end{document}